\newtheorem{dummy}{dummy}[subsection]
\newtheorem{lemma}[dummy]{Lemma}
\newtheorem*{thm*}{Theorem}
\theoremstyle{definition}
\newtheorem*{prop*}{Proposition}
\newtheorem*{conj*}{Conjecture}
\newtheorem*{example*}{Example}
\numberwithin{equation}{subsection}
\newcommand{\bA}{\mathbf{A}}
\newcommand{\bC}{\mathbf{C}}
\newcommand{\bP}{\mathbf{P}}
\newcommand{\bQ}{\mathbf{Q}}
\newcommand{\bR}{\mathbf{R}}
\newcommand{\bZ}{\mathbf{Z}}
\newcommand{\cS}{\mathcal{S}}
\newcommand{\KU}{\mathbf{K}}
\newcommand{\ku}{\mathbf{ku}}
\newcommand{\cC}{\mathcal{C}}
\newcommand{\cO}{\mathcal{O}}
\newcommand{\Mod}{\mathrm{Mod}}
\newcommand{\Perf}{\mathrm{Perf}}
\newcommand{\Fuk}{\mathrm{Fuk}}
\newcommand{\Sh}{\mathrm{Sh}}
\newcommand{\Hom}{\mathrm{Hom}}
\newcommand{\Maps}{\mathrm{Maps}}
\newcommand{\Spec}{\mathrm{Spec}}
\newcommand{\Kalg}{\mathbf{K}_{\mathrm{alg}}}
\newcommand{\Kblanc}{\mathbf{K}_{\mathrm{Blanc}}}
\newcommand{\MV}{\mathrm{MV}}
\newcommand{\kmot}{\mathbf{k}_{\mathrm{mot}}}
\newcommand{\GL}{\mathrm{GL}}
\newcommand{\Spin}{\mathrm{Spin}}
\newcommand{\tors}{\mathrm{tors}}
\newcommand{\Sq}{\mathrm{Sq}}
\newcommand{\SO}{\mathrm{SO}}
\begin{document}

\title{Complex $K$-theory of mirror pairs}
\author{David Treumann}

\maketitle

\begin{abstract}
We formulate some conjectures about the $K$-theory of symplectic manifolds and their Fukaya categories, and prove some of them in very special cases.
\end{abstract}

\section{Introduction}

Let $\KU^*(X) = \KU^0(X) \oplus \KU^1(X)$ denote the complex $K$-theory of a space $X$.  I am not sure who first proposed that when $X$ and $\hat{X}$ are a mirror pair of compact Calabi-Yau $3$-folds one should have isomorphisms
\begin{equation}
\label{eq:111}
\KU^0(X) \cong \KU^1(\hat{X}) \text{ and } \KU^1(X) \cong \KU^0(\hat{X})
\end{equation}
--- it is an instance of the string-theoretical idea \cite{MM,Moore,Witten} that ``$D$-branes have charges in $K$-theory.''  Rationally, \eqref{eq:111} is a consequence of the usual Hodge-diamond flip, but the question of whether it holds becomes interesting if $\KU^*(X)$ or $\KU^*(\hat{X})$ has torsion, or if one and not the other group is known to be torsion-free.  It might be interesting more generally if one searches for very natural isomorphisms, more on that in \S\ref{sec:three}.
\medskip

I believe that \eqref{eq:111} is an open problem.  Batyrev and Kreuzer in \cite{BK} gave a case-by case verification for the half-billion mirror pairs associated with 4d reflexive polytopes, actually obtaining isomorphisms in integral cohomology
\begin{equation}
\label{eq:112}
\tors(H^2(X,\bZ)) \cong \tors(H^3(\hat{X},\bZ)) \qquad \tors(H^4(X,\bZ)) \cong \tors(H^5(\hat{X},\bZ))
\end{equation}
and deducing \eqref{eq:111} from the Atiyah-Hirzebruch spectral sequence.  But Addington \cite{Addington} has given examples of derived equivalent $3$-folds $\hat{X}$ and $\hat{X}'$ where $H^3(\hat{X},\bZ)$ and $H^3(\hat{X}',\bZ)$ have different torsion subgroups, suggesting that \eqref{eq:112} should not hold in general.

\medskip

In \S\ref{sec:two}, we will give an explicit example, by verifying \eqref{eq:111} in one new case: a $T$-dual pair of flat $3$-folds (for which homological mirror symmetry is essentially known after \cite{Abouzaid}) 
\[
X:= X_{1,5} \qquad \hat{X} := X_{2,12}
\]
with $\KU^0(X) \cong \KU^1(\hat{X})$ but $\tors(H^2(X,\bZ)) = (\bZ/4)^3$ and $\tors(H^3(\hat{X},\bZ)) = \bZ/4$.
\medskip

In \S\ref{sec:three} we will discuss conjectures --- some of mine and one of Ganatra's --- about the $K$-theory of Fukaya categories.

\section{$3$-folds}
\label{sec:two}

\subsection{The flat $3$-manifold $B$.}

Let $B$ denote the quotient of $\bR^3/\bZ^3$ by the action of $\bZ/2 \times \bZ/2$ whose three nontrivial operators are
\begin{equation}
\label{eq:Fed-Sch}
\begin{array}{ccc}
\alpha(x_1,x_2,x_3)& := & (x_1 + \frac{1}{2},-x_2 + \frac{1}{2},-x_3) \\
\beta(x_1,x_2,x_3)& :=& (-x_1+\frac{1}{2},-x_2,x_3+\frac{1}{2}) \\
\gamma(x_1,x_2,x_3)& :=& (-x_1,x_2+\frac{1}{2},-x_3 + \frac{1}{2}) 
\end{array}
\end{equation}
It is the $3$-manifold studied in \cite{HW}.  We regard it as having a basepoint at image of $0 \in \bR^3$, and as having a flat metric given by the usual dot product on $\bR^3$.  The fundamental group of $B$ is one of the Fedorov-Schoenflies crystallographic groups, with presentation \cite[Th. 3.5.5]{Wolf}
\begin{equation}
\label{eq:Wolf}
\begin{array}{ccc}
\alpha^2 = t_1 & \alpha t_2 = t_2^{-1} \alpha &  \alpha t_3 = t_3^{-1} \alpha \\
\beta t_1 = t_1^{-1} \beta & \beta t_2 = t_2^{-1} \beta &  \beta^2 = t_3 \\
\gamma t_1 = t_1^{-1} \gamma & \gamma^2 = t_2 & \gamma t_3 = t_3^{-1} \gamma
\end{array}
\end{equation}
and
\[
[t_1,t_2] = [t_2,t_3] = [t_3,t_1] =  \gamma \beta \alpha = 1
\]
The $t_1,t_2,t_3$ are translation operators on $\bR^3$.  Being flat, the holonomy group of $B$ is a representation 
\begin{equation}
\label{eq:holonomy}
\pi_1(B) \to \SO(3)
\end{equation}
Its image is isomorphic to $\bZ/2 \times \bZ/2$ (the group of diagonal matrices in $\SO(3)$).  Abelianizing \eqref{eq:Wolf} gives $H_1(B) = \bZ/4 \oplus \bZ/4$, and since $\alpha,\beta,\gamma$ are orientation-preserving we have by Poincar\'e duality
\begin{equation}
\label{eq:HiB}
H_0(B) = \bZ \qquad H_1(B) = \bZ/4 \oplus \bZ/4 \qquad H_2(B) = 0 \qquad H_3(B) = \bZ
\end{equation}

\subsection{Tri-elliptic $3$-fold $X_{0,4}$}
\label{subsec:X04}
Let $\tau_1,\tau_2,\tau_3$ be complex numbers with positive imaginary part, and put
\begin{equation}
E_i := \bC/(\bZ + \tau_i \bZ)
\end{equation}

Let $X_{0,4}$ be the quotient of $E_1 \times E_2 \times E_3$ by the complexification of the operators \eqref{eq:Fed-Sch}, i.e.
\begin{equation}
\label{eq:Fed-Sch-complex}
\begin{array}{ccc}
\alpha(z_1,z_2,z_3)& := & (z_1 + \frac{1}{2},-z_2 + \frac{1}{2},-z_3) \\
\beta(z_1,z_2,z_3)& :=& (-z_1+\frac{1}{2},-z_2,z_3+\frac{1}{2}) \\
\gamma(z_1,z_2,z_3)& :=& (-z_1,z_2+\frac{1}{2},-z_3 + \frac{1}{2}) 
\end{array}
\end{equation}
(We follow \cite{DW} for the name).  The projections $\bC \to \bR:x_i + \tau_i y_i \mapsto x_i$ descend to a map
\begin{equation}
\label{eq:X04B}
X_{0,4} \to B
\end{equation}
which is split by the subset cut out by $y_1 = y_2 = y_3 = 0$.  The translation action of
\begin{equation}
\label{eq:V}
V:= \bR\tau_1 \times \bR\tau_2 \times \bR\tau_3
\end{equation}
on $\bC \times \bC \times \bC$ descends to an action on $E_1 \times E_2 \times E_3$ and on $X_{0,4}$.  The action preserves the fibers  of \eqref{eq:X04B}, and determines an identification of the fiber over $b$ with the quotient of $V$ by a lattice $V_{\bZ,b} \subset V$.  We will denote the lattice over the basepoint by $M_{0,4}$, i.e.
\begin{equation}
\label{eq:M04}
M_{0,4} := V_{\bZ,0} = \bZ\tau_1 \times \bZ\tau_2 \times \bZ \tau_3
\end{equation}

The action of $\pi_1(B)$ on $V$ and on $M_{0,4}$ is through the holonomy $\bZ/2 \times \bZ/2$ \eqref{eq:holonomy}.

\subsection{More tri-elliptic $3$-folds}
On each $E_i$ we may define a biholomorphic action of $\bZ/2 \times \bZ/2 \times \bZ/2$: the three generators act by
\[
z \mapsto z+1/2 \qquad z \mapsto z + \tau_i/2 \qquad z \mapsto -z
\]
Altogether this defines an action of $(\bZ/2)^{\times 9}$ on $E_1 \times E_2 \times E_3$.  In \cite{DW}, Donagi and Wendland classified the subgroups that act freely.  The quotient $X = (E_1 \times E_2 \times E_3) /G$ must factor as a product of a surface and an elliptic curve, or else be isomorphic to one of the foursome
\begin{equation}
\label{eq:DW-names}
X_{0,4} \qquad X_{1,5} \qquad X_{1,11} \qquad X_{2,12}
\end{equation}
where $X_{0,4}$ is as in \S\ref{subsec:X04} and the other three are defined below.  These $3$-folds are part of a more general classification problem considered in \cite{DW}, which is reflected in the weird names.  They also appear in \cite{Lange}, where they are called ``hyperelliptic $3$-folds of type (2,2).''  Some older appearances are given in \cite{DonagiSharpe}.

Each of the $3$-folds \eqref{eq:DW-names} is aspherical, and fits into a fiber sequence
\begin{equation}
\label{eq:T-here}
V/M_{I,J} \to X_{I,J} \to B
\end{equation}
where $V$ is as in \eqref{eq:V} and $M_{I,J}$ is a lattice in $V$.

\subsection{Definition}
Let $X_{1,5}$ denote the quotient of $X_{0,4}$ by the involution
\begin{equation}
\label{eq:X15quot}
\qquad (z_1,z_2,z_3) \mapsto \left(z_1 + \frac{\tau_1}{2},z_2 + \frac{\tau_2}{2},z_3 + \frac{\tau_3}{2}\right) 
\end{equation}
Then
\[
M_{1,5} = M_{0,4} + \bZ\left(\tau_1/2, \tau_2/2, \tau_3/2\right)
\]

\subsection{Definition}
Let $X_{1,11}$ denote the quotient of $X_{0,4}$ by the involution
\begin{equation}
\label{eq:X111quot}
(z_1,z_2,z_3) \mapsto \left(z_1 + \frac{\tau_1}{2},z_2 + \frac{\tau_2}{2},z_3\right)
\end{equation}
Then
\[
M_{1,11} = M_{0,4} + \bZ\left(\tau_1/2,\tau_2/2,0\right)
\]

\subsection{Definition}
Let $X_{2,12}$ denote the quotient of $X_{0,4}$ by the $\bZ/2 \times \bZ/2$ group generated by the pair of involutions
\begin{equation}
\label{eq:X212quot}
(z_1,z_2,z_3) \mapsto \left(z_1 + \frac{\tau_1}{2},z_2 + \frac{\tau_2}{2},z_3\right) \text{ and }(z_1,z_2,z_3) \mapsto \left(z_1 ,z_2 + \frac{\tau_2}{2},z_3 + \frac{\tau_3}{2}\right)
\end{equation}
Then
\[
M_{2,12} = M_{0,4} + \bZ\left\{(\tau_1/2,\tau_2/2,0),(0,\tau_2/2,\tau_3/2)\right\}
\]

\subsection{$T$-duality}
The $T$-dual fibration to $X_{I,J} \to B$, of Strominger-Yau-Zaslow, is the space of pairs $(b,L)$ where $b \in B$ and $L \in H^1(V/V_{\bZ,b},\mathrm{U}(1))$ is the isomorphism class of a rank one unitary local system on the fiber above $b$.  Let us denote it by $\hat{X}_{I,J}$.  It is another split torus fibration
\begin{equation}
\label{eq:T-hat-here}
V^*/\hat{M}_{I,J} \to \hat{X}_{I,J} \to B
\end{equation}
where $V^* := \Hom(V,\mathfrak{u}(1))$ and $\hat{M} \subset V^*$ is the dual lattice to $M$.  As such $\hat{X}_{I,J}$ is determined up to homotopy equivalence by the dual $\pi_1(B)$-module (equivalently, the dual $\bZ/2 \times \bZ/2$-module) to $M_{I,J}$.  $M_{0,4}$ and $M_{1,11}$ are self-dual, while $M_{1,5}$ and $M_{2,12}$ are dual to each other, and therefore we have homotopy equivalences
\begin{equation}
\label{eq:T-dual-IJ}
\hat{X}_{0,4} \simeq X_{0,4} \qquad \hat{X}_{1,5} \simeq X_{2,12}, \qquad \hat{X}_{1,11} \simeq X_{1,11}
\end{equation}
The homotopy equivalences \eqref{eq:T-dual-IJ} can be taken to be natural diffeomorphisms, if $X_{I,J}$ has parameters $\tau_1,\tau_2,\tau_3$ and we take the corresponding parameters for $\hat{X}_{I,J}$ to be the purely imaginary numbers $(i|\tau_1|^{-1},i|\tau_2|^{-1}, i|\tau_3|^{-1})$.

\subsection{$K$-theory}
Let $X = X_{I,J}$ and $\hat{X} = X_{I',J'}$ be a dual pair of the $3$-folds.  We wish to prove \eqref{eq:111}, that $\KU^0(X) \cong \KU^1(\hat{X})$ and that $\KU^1(X) \cong \KU^0(\hat{X})$ --- we will do so without actually computing $\KU^*(X)$ and $\KU^*(\hat{X})$, indeed I do not quite know what the $K$-theory of these manifolds is \S\ref{subsec:HXIJ}--\ref{subsec:AH-fil}.

Let $\KU$ denote the complex $K$-theory spectrum.  It is an $E_{\infty}$-ring spectrum.  We write $\Mod(\KU)$ for the symmetric monoidal $\infty$-category of module spectra over $\KU$, and we will study sheaves of $\KU$-module spectra on $X$, $\hat{X}$ and related spaces.  These are stable $\infty$-categories --- for an $\infty$-category we will write $\Maps(c,d)$ for the space of maps and $[c,d]$ for the set of homotopy classes of maps between two objects.  We write $\Sigma$ for the suspension functor in a stable $\infty$-categories.

If $U$ is a manifold we write $\KU_U$ for the constant sheaf of $\KU$-module spectra on $U$, and $\omega_U$ for the orientation sheaf.

\subsection{Lemma}
\label{lem:spinc-structures}
Each of the spaces $B, X, \hat{X}, X \times_B \hat{X}$ are $\KU$-orientable --- that is, there are isomorphisms of sheaves
\begin{equation}
\label{eq:spin-on-these}
\Sigma^{-3} \KU_B \cong \omega_B \qquad \Sigma^{-6} \KU_X \cong \omega_X \qquad \Sigma^{-6} \KU_{\hat{X}} \cong \omega_{\hat{X}} \qquad \Sigma^{-9} \KU_{X \times_B \hat{X}} \cong \omega_{X \times_B \hat{X}}
\end{equation}

\begin{proof}
Any $\Spin^c$-structure on a manifold induces a $\KU$-orientation, and one way to endow an oriented flat manifold with a $\Spin^c$ structure is to lift its holonomy representation
\begin{equation}
\label{eq:spin-n}
\pi_1 \to \SO(n)
\end{equation}
along the natural homomorphism $\Spin^c(n) \to \SO(n)$.  Each of $B$, $X$, $\hat{X}$ and $X \times_B \hat{X}$ fibers over $B$, and the holonomy around any loop in those fibers is trivial, so \eqref{eq:spin-n} factors through $\pi_1(B) \to \SO(3)$ \eqref{eq:holonomy}.  The equations \eqref{eq:Wolf} can be solved in $\Spin^c(3)$,  for instance we may solve them in $\Spin(3)$ by taking $\alpha,\beta,\gamma$ to be the usual unit quaternions.  Then the lift of \eqref{eq:spin-n} can be taken to be the composite of $\pi_1 \to \pi_1(B) \to \Spin(3)$ with any lift of $\Spin(3) \to \SO(3) \to \SO(n)$ to $\Spin^c(n)$.
\end{proof}

\subsection{Local-on-$B$ identifications of $K$-theory}

Write $\KU[U]$ for the $K$-homology spectrum and $\KU^U$ for the $K$-cohomology spectrum of a space $U$ --- that is, $\KU[U]$ is the smash product of $\KU$ with the suspension spectrum of $U$ and $\KU^U$ is the internal mapping object from $\KU[U]$ to $\KU$.  They are related to the $K$-homology and $K$-cohomology groups of $U$ by
\[
[\Sigma^i \KU,\KU[U]] \cong \KU_i(U)
\]
and
\[
\KU^i(U) \cong [\KU[U],\Sigma^i \KU] \cong [\Sigma^{-i} \KU, \KU^U]
\]
In terms of sheaf operations, we have
\[
\Gamma_c(\omega_U) = \KU[U] \qquad \Gamma(\KU_U) = \KU^U
\]

We consider the fiber square
\begin{equation}
\label{eq:this-square}
\xymatrix{
X \times_B \hat{X} \ar[r]^-g \ar[d]_-h & X \ar[d]^-{p} \\
\hat{X} \ar[r]_-{q} & B
}
\end{equation}
Factoring the maps $X \to \mathit{pt}$ and $\hat{X} \to \mathit{pt}$ through $B$ gives canonical isomorphisms
\begin{equation}
\label{eq:from-local-on-B}
\KU[X] \cong \Gamma_c(B,p_! \omega_X) = \Gamma(B,p_!\omega_X) \qquad \KU^{\hat{X}} \cong \Gamma(B,q_* \KU_{\hat{X}})
\end{equation}
where we replace $\Gamma_c$ with $\Gamma$ using the compactness of $B$.  The $\KU$-orientability of $X$ gives an identification of $\KU[X] \cong \Sigma^{-6} \KU^{\hat{X}}$.  So to prove \eqref{eq:111} it suffices to produce an isomorphism between $\Sigma^{-3} p_! \omega_X$ and $q_* \KU_{\hat{X}}$.  To that end, let us study the sheaf of spaces on $B$ whose sections over $U \subset B$ are given by
\begin{equation}
\label{eq:sheaf-of-maps}
\Maps\left(\left(\Sigma^{-3} p_! \omega_X\right)\vert_U, \left(q_* \KU_{\hat{X}}\right)\vert_U\right)
\end{equation}
where $\Maps$ is taken in the $\infty$-category of sheaves of $\KU$-modules over $U$.  

\subsection{Lemma}
If $\pi = q\circ h = p \circ g$ denotes the projection $X \times_B \hat{X} \to B$, and one fixes $\KU$-orientations of $X$, $\hat{X}$, and $X\times_B \hat{X}$, there are natural isomorphisms
\begin{equation}
\label{eq:sheafhom2}
\Maps\left(\left(\Sigma^{-3} p_! \omega_X\right)\vert_U, \left(q_* \KU_{\hat{X}}\right)\vert_U\right)
 \cong \Maps(\KU[\pi^{-1}(U)],\KU)
\end{equation}
where the left-hand side is \eqref{eq:sheaf-of-maps} and on the right-hand side $\Maps$ is taken in the $\infty$-category of $\KU$-modules.

\begin{proof}

\begin{eqnarray}
\quad \quad \Maps\left(\left(\Sigma^{-3} p_! \omega_X\right)\vert_U, \left(q_* \KU_{\hat{X}}\right)\vert_U\right) & \cong &  \Maps\left(\left(\Sigma^{-3} q^* p_! \omega_X\right)\vert_{q^{-1}(U)}, \KU_{{q^{-1}(U)}}\right) \label{eq:364} \\
& \cong & \Maps\left(\left(\Sigma^{-3} h_! g^* \omega_X\right)\vert_{q^{-1}(U)}, \KU_{{q^{-1}(U)}}\right) 
\label{eq:365}
\\
\label{eq:366}
& \cong & \Maps\left(\Sigma^{-3} \left(h_! g^* \Sigma^{-6} \KU_{X}\right)\vert_{q^{-1}(U)}, \KU_{{q^{-1}(U)}}\right) \\
\label{eq:367}
& \cong & \Maps\left(\Sigma^{-9} \left(h_! \KU_{X \times_B \hat{X}}\right)\vert_{q^{-1}(U)}, \KU_{{q^{-1}(U)}}\right) \\
\label{eq:368}
& \cong & \Maps\left(\left(h_! \omega_{X \times_B \hat{X}}\right)\vert_{q^{-1}(U)}, \KU_{{q^{-1}(U)}}\right) \\
\label{eq:369}
& \cong & \Maps\left(\left(h_! \omega_{X \times_B \hat{X}}\right)\vert_{q^{-1}(U)}, \Sigma^{6}\omega_{{q^{-1}(U)}}\right) \\
\label{eq:3610}
& \cong & \Maps\left(\Gamma_c\left(\left(h_! \omega_{X \times_B \hat{X}}\right)\vert_{q^{-1}(U)}\right), \Sigma^{6}\KU\right) \\
\label{eq:3611}
& \cong & \Maps(\KU[(q \circ h)^{-1}(U)],\Sigma^6\KU)
\end{eqnarray}
where \eqref{eq:364} is the $(q^*,q_*)$-adjunction, \eqref{eq:365} is proper base-change, \eqref{eq:366} uses the $\KU$-orientation of $X$,\eqref{eq:368} uses the $\KU$-orientation of $X \times_B \hat{X}$,  \eqref{eq:369} uses the $\KU$-orientation of $q^{-1}(U) \subset \hat{X}$, \eqref{eq:3610} uses the $(q^{-1}(U) \to \mathit{pt})_!,(q^{-1}(U) \to \mathit{pt})^!$ adjunction.  Finally one applies the Bott isomorphism $\KU \cong \Sigma^6 \KU$ to obtain the right-hand-side of \eqref{eq:sheafhom2}.
\end{proof}

\subsection{Poincar\'e bundle}
\label{subsec:Pe-bundle}
When $T$ and $\hat{T}$ are dual tori,  (for instance, if $T = V/M$ \eqref{eq:T-here} and $\hat{T} = V^*/\hat{M}$ \eqref{eq:T-hat-here} are fibers above the basepoint of $X \to B$ and $\hat{X} \to B$), there is a canonical pairing $H_1(T) \otimes H_1(\hat{T}) \to \bZ$, which determines a canonical element
\begin{equation}
\label{eq:coev}
\mathrm{coev} \in H^1(T;\bZ) \otimes H^1(\hat{T};\bZ) \subset H^2(T \times \hat{T};\bZ)
\end{equation}
Let us say that a line bundle on $X \times_B \hat{X}$ is a ``Poincar\'e bundle'' if its restriction to a fiber is this canonical element.

The connected components of the right-hand side of \eqref{eq:sheaf-of-maps} are virtual vector bundles on $\pi^{-1}(U)$.  In particular, a line bundle on $X \times_B \hat{X}$ determines a homotopy class of maps 
\begin{equation}
\label{eq:line-bundle-map}
P_{L}:\Sigma^{-3} p_! \omega_X \to q_* \KU_{\hat{X}}
\end{equation}

\begin{lemma}
\label{lem:poincare}
If $L$ is a Poincar\'e bundle, $P_L$ is an isomorphism.
\end{lemma}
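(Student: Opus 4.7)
The plan is to verify the isomorphism locally on $B$. Since $P_L$ is a map of sheaves of $\KU$-modules, it suffices to check that $P_L|_U$ is an equivalence for every sufficiently small contractible open $U \subset B$. Over such a $U$, the three torus bundles trivialize as $p^{-1}(U) \cong T \times U$, $q^{-1}(U) \cong \hat{T} \times U$, and $\pi^{-1}(U) \cong T \times \hat{T} \times U$, where $T$ and $\hat{T}$ denote the fibers over any point of $U$; these are a pair of dual real $3$-tori, and the restriction of $L$ to $T \times \hat{T}$ is by hypothesis a Poincar\'e line bundle.

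By the previous lemma, the space of maps between the two restrictions is $\Maps_{\KU}(\KU[T \times \hat{T} \times U], \KU)$, which for contractible $U$ reduces to $\KU^0(T \times \hat{T})$; unwinding the construction of $P_L$ identifies its local avatar with the class $[L|_{T \times \hat{T}}]$. Meanwhile, proper base change together with the $\KU$-orientations identifies the global sections over $U$ of $(\Sigma^{-3} p_!\omega_X)|_U$ with $\KU[T]$ and those of $(q_*\KU_{\hat{X}})|_U$ with $\KU^{\hat{T}}$. Thus the question reduces to showing that Fourier--Mukai transform against the Poincar\'e class induces a $\KU$-module equivalence
\begin{equation*}
\KU[T] \xrightarrow{\sim} \KU^{\hat{T}}.
\end{equation*}

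I would prove this by induction on $d = \dim_{\bR} T$. For the base case $d=1$, we have $T \cong \hat{T} \cong S^1$ and the Poincar\'e bundle is the line bundle with $c_1 = 1$ on $S^1 \times S^1$. Both $\KU[S^1]$ and $\KU^{S^1}$ are free $\KU$-modules of rank $2$ with generators in degrees $0$ and $1$, and the induced map is an equivalence by a direct Bott-periodicity calculation: the rank of $L$ contributes the identity on the degree-$0$ summands, while the $c_1$ piece contributes the Bott class that swaps the degree-$1$ summands into isomorphism. For the inductive step, pick compatible splittings $T = T_1 \times T'$ and $\hat{T} = \hat{T}_1 \times \hat{T}'$; the Poincar\'e bundle then factors as an external tensor product, and a K\"unneth argument expresses the induced Fourier--Mukai map as the tensor product of the maps for the two factors, both of which are equivalences by induction.

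The main obstacle is the bookkeeping in the second paragraph: carefully tracing through the chain of adjunctions, base-change isomorphisms, and $\KU$-orientation identifications used in the previous lemma's proof to confirm that, locally, $P_L$ genuinely coincides with Fourier--Mukai convolution against $L$. Once that identification is pinned down, the reduction to the one-dimensional Poincar\'e-bundle equivalence is routine, and the $d=1$ case is a standard Bott-periodicity computation.
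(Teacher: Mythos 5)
Your argument is correct and proceeds along the same opening lines as the paper---reducing to a stalkwise check over $B$, identifying the stalk of $P_L$ with the Fourier--Mukai map $\KU[T]\to\KU^{\hat T}$ attached to the class $[L|_{T\times\hat T}]\in\KU^0(T\times\hat T)$, and doing the $\mathrm{U}(1)\times\mathrm{U}(1)$ case by a direct Bott-periodicity computation. Where you diverge is in passing from $d=1$ to general $d$. The paper observes that $\KU_*(T)$ (with the Pontrjagin product) and $\KU^*(\hat T)$ (with the cup product) are both exterior algebras on degree-one generators, proves that the map \eqref{eq:251} is a ring homomorphism by checking the equality of two classes in $H^2(T\times T\times\hat T;\bZ)$, and then needs only the degree-one isomorphism---i.e.\ the circle case---to conclude. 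You instead choose compatible splittings $T=T_1\times T'$, $\hat T=\hat T_1\times\hat T'$, note that $\mathrm{coev}_T=\mathrm{coev}_{T_1}+\mathrm{coev}_{T'}$ so that the Poincar\'e line bundle factors as an external tensor product, and invoke the Künneth equivalences $\KU[T_1\times T']\simeq\KU[T_1]\otimes_{\KU}\KU[T']$ and $\KU^{\hat T_1\times\hat T'}\simeq\KU^{\hat T_1}\otimes_{\KU}\KU^{\hat T'}$ (both hold since all modules in sight are perfect over $\KU$) to express the Fourier--Mukai map as the $\otimes_{\KU}$-product of the maps for the two factors, closing the induction. Both routes bottom out in the same circle computation; your Künneth factorization is a little more elementary in that it avoids verifying multiplicativity of the transform, while the paper's multiplicative argument exhibits slightly more structure, namely that the duality between $\KU^T$ and $\KU^{\hat T}$ intertwines Pontrjagin and tensor products. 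Either approach is a complete proof.
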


\begin{proof}
We prove that $P_L$ is an isomorphism on stalks.  More generally we prove that if $T$ and $\hat{T}$ are dual tori, a line bundle whose Chern class is \eqref{eq:coev} exhibits $\KU^T$ and $\KU^{\hat{T}}$ as dual objects in the monoidal category $\Mod(\KU)$.  Such a line bundle determines a homotopy class of maps
\begin{equation}
\label{eq:241}
\KU \to \KU^{T\times \hat{T}} \cong \KU^T \otimes_{\KU} \KU^{\hat{T}}
\end{equation}
in $\Mod(\KU)$, and we will show that for all $i$ the composite
\begin{equation}
\label{eq:251}
[\Sigma^i \KU^T,\KU] \xrightarrow{\otimes \KU^{\hat{T}}} [\Sigma^i \KU^{T} \otimes_{\KU} \KU^{\hat{T}}, \KU^{\hat{T}}] \xrightarrow{\eqref{eq:241}} [\Sigma^i \KU,\KU^{\hat{T}}]
\end{equation}
is an isomorphism.

In case $T = \hat{T} = \mathrm{U}(1)$, we have canonically $\KU^T \cong \KU \oplus \Sigma \KU$, $\KU^{\hat{T}} \cong \KU \oplus \Sigma \KU$, and 
\begin{equation}
\label{eq:TTU1}
\KU^{T \times \hat{T}} \cong \KU \oplus \Sigma \KU \oplus \Sigma \KU \oplus \Sigma^2 \KU.
\end{equation}  
Then \eqref{eq:241} is the Bott isomorphism $\KU \cong \Sigma^2 \KU$ onto the last summand of \eqref{eq:TTU1}, and one can check \eqref{eq:251} directly.

In the general case, the domain of \eqref{eq:251} is $\KU_{i}(T)$ and the codomain is $\KU^{-i}(\hat{T})$, and the square
\[
\xymatrix{
\KU_1(\mathrm{U}(1)) \otimes_{\bZ} \Hom(\mathrm{U}(1),T) \ar[r] \ar[d] & \KU_1(T) \ar[d]^{\eqref{eq:251}}\\
\KU^{-1}(\mathrm{U}(1)) \otimes_{\bZ} \Hom(\hat{T},\mathrm{U}(1)) \ar[r] & \KU^{-1}(\hat{T})
}
\]
commutes, where the left vertical arrow is \eqref{eq:251} for $T = \hat{T} = \mathrm{U}(1)$, tensored with the identification of cocharacters of $T$ with characters of $\hat{T}$.  The horizontal arrows induce graded ring isomorphisms
\begin{equation}
\label{eq:ring-iso}
\Lambda (\Hom(\mathrm{U}(1),T)) \otimes \KU^* \to \KU_*(T) \qquad \Lambda (\Hom(\hat{T},\mathrm{U}(1))) \otimes \KU^* \to \KU^*(T)
\end{equation}
where the multiplication on $\KU_*(T)$ is defined using the group structure on $T$ (the Pontrjagin product), and the ring structure on $\KU^*(\hat{T})$ is tensor product of vector bundles.  Thus we may complete the proof that \eqref{eq:251} is an isomorphism by noting that it intertwines the Pontrjagin product on $\KU_*(T)$ with the tensor product on $\KU^*(\hat{T})$.  A strong form of this is true but to make use of \eqref{eq:ring-iso} we only need to note that (letting $m:T \times T \to T$ denote the multiplication and $\Delta:\hat{T} \to \hat{T} \times \hat{T}$ the diagonal) the following two elements of $\KU^0(T \times T \times \hat{T})$ are equal:
\begin{itemize}
\item The pullback of \eqref{eq:coev} along $m \times 1:T \times T \to \hat{T} \to T \times \hat{T}$
\item The pullback of \eqref{eq:coev} $\boxtimes$ \eqref{eq:coev} along the map $T \times T \times \hat{T} \to T \times \hat{T} \times T \times \hat{T}$ that carries $(t_1,t_2,\hat{t})$ to $(t_1,\hat{t},t_2,\hat{t})$ 
\end{itemize}
In fact these are equal in $H^2(T \times T \times \hat{T};\bZ)$.  It follows that two maps from the upper left to the lower right corner of the evident square 
\[
\xymatrix{
\KU[T] \otimes_{\KU} \KU[T] \ar[d] \ar[r] & \KU^{\hat{T}} \otimes_{\KU} \KU^{\hat{T}} \ar[d] \\
\KU[T] \ar[r] & \KU^{\hat{T}}
}
\]
are homotopic, and therefore that \eqref{eq:251} is a ring homomorphism.
\end{proof}

\subsection{Theorem}
\label{th:111}
Let $X$ and $\hat{X}$ be as in \eqref{eq:T-dual-IJ}.  Then \eqref{eq:111} holds, i.e.
\[
\KU^0(X) \cong \KU^1(\hat{X}) \text{ and } \KU^1(X) \cong \KU^0(\hat{X})
\]

\begin{proof}
After \eqref{eq:from-local-on-B} and Lemma \ref{lem:poincare}, it suffices to construct a Poincar\'e bundle on $X \times_B \hat{X}$.  The fundamental group $\pi_1(B)$ acts on $H^2(T \times \hat{T};\bZ) = H^2(V/M \times V^*/\hat{M};\bZ)$, and the canonical class \eqref{eq:coev} is fixed by this action.  We will prove the existence of a Poincar\'e bundle by showing that the map
\begin{equation}
\label{eq:surj}
H^2(X \times_B \hat{X};\bZ) \to H^2(T \times \hat{T};\bZ)^{\pi_1(B)}
\end{equation} 
is a surjection.  As $X \times_B \hat{X}$ is a $K(\pi,1)$-space, the domain of \eqref{eq:surj} is isomorphic to the cohomology of the fundamental group $\pi_1(X \times_B \hat{X})$.  To prove that it is a surjection is equivalent to showing that the differentials
\[
d_2:H^2(T \times \hat{T};\bZ)^{\pi_1(B)} \to H^2(\pi_1(B); H^1(T \times \hat{T};\bZ)
\]
and
\[
d_3:\ker(d_2) \to H^3(B;\bZ)
\]
vanish, in the Serre spectral sequence of the fibration $X \times_B \hat{X} \to B$.  Let us denote this spectral sequence by ${}^{X\hat{X}}E_r^{st}$.  We similarly denote the Serre spectral sequence of $X \to B$ by ${}^X E_r^{st}$ and of $\hat{X} \to B$ by ${}^{\hat{X}} E_r^{st}$.

Since the fibration has a section, all of $H^3(B;\bZ)$ must survive to the $E_{\infty}$-page, so $d_3$ vanishes.  The codomain of $d_2$ is
\begin{equation}
\label{eq:dirsumdec}
H^2(\pi_1(B),H^1(T;\bZ)) \oplus H^2(\pi_1(B),H^1(\hat{T};\bZ))
\end{equation}
The sections of $X \to B$ and $\hat{X} \to B$ induce maps $X \to X \times_B \hat{X}$ and $\hat{X} \to X \times_B \hat{X}$ that commute with the projections to $B$, which in turn induce maps of spectral sequences
\begin{equation}
\label{eq:mapsss}
{}^X E_r^{st} \to {}^{X \hat{X}} E_r^{st} \qquad {}^{\hat{X}} E_r^{st} \to {}^{X \hat{X}} E_r^{st}
\end{equation}
The direct sum decomposition \eqref{eq:dirsumdec} is induced by \eqref{eq:mapsss} on $E_2^{21}$, thus we can complete the proof by showing that ${}^X E_2^{21}$ and ${}^{\hat{X}} E_2^{21}$ survive to the $E_{\infty}$-pages, i.e. that 
\begin{equation}
{}^X E_2^{02} \to {}^X E_2^{21} \qquad {}^{\hat{X}} E_2^{02} \to {}^{\hat{X}} E_2^{21}
\end{equation}
are both zero.  Now ${}^X E_2^{02} = H^0(\pi_1;H^2(T;\bZ)) = 0$ and $H^0(\pi_1;H^2(\hat{T};\bZ)) = 0$:  $H^2(T;\bZ) \cong M \subset V$ and $H^2(\hat{T};\bZ) \cong \hat{M} \subset V^*$ as $\pi_1$-modules, and $\pi_1$ acts on $V$ and $V^*$ without invariants ($V$ and $V^*$ split as the sum of the three nontrivial characters $\pi_1 \to \GL_1(\bR)$).  
\end{proof}

\subsection{Cohomology of $X_{I,J}$}
\label{subsec:HXIJ}
The two-vertex regular cell complex structure on $S^1$, with vertices at $0$ and $\pi$, is preserved by the action of $\bZ/2 \times \bZ/2$ generated by
\[
\theta \mapsto \theta+ \pi \qquad \theta \mapsto -\theta
\]
Each of the $3$-folds \eqref{eq:DW-names} can be written as a quotient of a torus $T^6 = S^1 \times S^1 \times S^1 \times S^1 \times S^1 \times S^1$ by the free action of an elementary abelian $2$-group that preserves the product cell structure.  The cellular cochain complex of $T^6$ is a complex of free $\bZ[G]$-modules
\begin{equation}
\label{eq:T6complex}
\bZ^{64}\to\bZ^{384} \to \bZ^{960} \to \bZ^{1280} \to \bZ^{960} \to \bZ^{384} \to \bZ^{64}
\end{equation}
of $\bZ[G]$-rank $2^6 \binom{6}{i}/|G|$ in degree $i$.  Passing to invariants gives a cochain complex for the cohomology of $T^6/G$, small enough to handle by computer --- I used sage.  Besides $H^0 = H^6 = \bZ$ and $H^1 = 0$, we have
\[
\begin{array}{|c|c|c|c|c|c|}
\hline
& H^2 & H^3 & H^4 & H^5  \\
\hline
X_{0,4} & \bZ^3 \oplus (\bZ/4)^2 \oplus (\bZ/2)^3 & \bZ^8 \oplus (\bZ/2)^3 & \bZ^3 \oplus (\bZ/2)^3 & (\bZ/4)^2 \oplus (\bZ/2)^3 \\
\hline
X_{1,5} & \bZ^3 \oplus (\bZ/4)^3 & \bZ^8 \oplus (\bZ/2)^2 & \bZ^3 \oplus (\bZ/2)^2 & (\bZ/4)^3 \\
\hline
X_{1,11}  & \bZ^3 \oplus (\bZ/4)^2 \oplus (\bZ/2)^2 & \bZ^8 \oplus (\bZ/2)^2 & \bZ^3 \oplus (\bZ/2)^2 & (\bZ/4)^2 \oplus (\bZ/2)^2  \\
\hline
X_{2,12}  & \bZ^3 \oplus (\bZ/4)^2 \oplus (\bZ/2)^2 & \bZ^8 \oplus \bZ/4 & \bZ^3 \oplus \bZ/4 & (\bZ/4)^2 \oplus (\bZ/2)^2 \\
\hline
\end{array}
\]
The top row was previously computed in \cite{BCDP}, and the $H^5$ (equivalently, $H^2$) columns in \cite{DW}.  

\subsection{Atiyah-Hirzebruch filtration}
\label{subsec:AH-fil}
Let $X$ be a connected closed manifold of real dimension $6$.  $\KU^*(X)$ carries the Atiyah-Hirzebruch filtration
\begin{equation}
\label{eq:AH}
\begin{array}{ccccccccc}
\KU^0(X) & = & F^0 \KU^0(X) & \supset & F^2 \KU^0(X) & \supset & F^4 \KU^0(X) & \supset & F^6 \KU^0(X)  \\
\KU^1(X) & = & F^1 \KU^1(X) & \supset & F^3\KU^1(X) & \supset & F^5 \KU^1(X)
\end{array}
\end{equation}
where $F^k(\KU^*(X))$ consists of those classes that vanish when restricted to any $(k-1)$-dimensional submanifold.  The associated graded pieces of this filtration are the groups at the last page of the Atiyah-Hirzebruch spectral sequence:
\[
E_2^{st} = H^s(X,\KU^t(\mathit{pt})) \implies E_{\infty}^{st} = F^{s+t} \KU^s(X)/F^{s+t + 1} \KU^s(X)
\]
If $X$ is oriented, then the spectral sequence degenerates immediately: $E_2^{st} = E_{\infty}^{st}$.   The argument is given in \cite{BrDi}  --- let us briefly repeat the argument here.  Since $\KU^t(\mathit{pt}) = 0$ for $t$ odd, all the even differentials $d_{2p}$ vanish.  In general, $d_{2p-1}$ vanishes on $H^i(X,\bZ)$ for $i \leq 2p-2$ \cite[\S 7]{Atiy}, so on a 6-dimensional complex the only possible nonvanishing differential is $d_3:H^3(X,\bZ) \to H^6(X,\bZ)$.  Even this differential must vanish if $H^6(X,\bZ)$ has no torsion \cite[\S 2.4]{AtHi}, i.e. if $X$ is orientable.  

Plausibly, whenever $X$ is a Calabi-Yau $3$-fold, or even just admits a Spin structure, the Atiyah-Hirzebruch filtration might split: that is, there might be a $\bZ/2$-graded isomorphism between
\[
\KU^0(X)\oplus \KU^1(X) \text{ and } \bigoplus H^i(X;\bZ)
\]
This is claimed in \cite{Doran}, but I believe the proof there has a gap (discussed in \S\ref{subsec:chern-classes}).  I do not know whether the filtration on $\KU^*(X_{I,J})$ splits: if it does, one could conclude Theorem \ref{th:111} directly from the computations in \S\ref{subsec:HXIJ}.  On an oriented 6-manifold one necessary and sufficient condition for the filtration of $\KU^0(X)$ to split is the existence of a function $\varphi:H^2(X;\bZ) \to H^4(X;\bZ)$ that obeys
\[
\varphi(c+c') - \varphi(c) - \varphi(c') = c \cup c'
\]
(For instance, we could take $\varphi(c) = c^2/2$ if we could divide by $2$).  The problem of computing the cup product on $H^*(X_{I,J};\bZ)$ also arose in \cite{BCDP}.  Determining this by computer is more difficult --- the problem is that, although the cup product on $H^*(T^6)$ is induced by a (noncommutative) ring structure on the cochains \eqref{eq:T6complex}, the groups $G$ do not act by ring automorphisms.  One can solve this by passing to the barycentric subdivision of $S^1$ (which induces a subdivision of $(S^1)^{\times 6}$), but the resulting chain complexes are too big to treat in a simple-minded way. 

\subsection{Chern classes}
\label{subsec:chern-classes}
A virtual vector bundle has a well-defined Chern class, giving us maps
\begin{equation}
\label{eq:chern-classes}
c_i:\KU^0(X) \to H^{2i}(X,\bZ) \qquad c_i^\Sigma:\KU^1(X) \to H^{2i -1}(X,\bZ)
\end{equation}
The second map $c_i^{\Sigma}$ is the composite of 
\[
\KU^1(X) \cong \KU^2(\Sigma X) \cong \KU^0(\Sigma X) \xrightarrow{c_i} H^{2i}(\Sigma X,\bZ) = H^{2i-1}(X,\bZ)
\]
Except for $c_0$, the functions $c_i$ of \eqref{eq:chern-classes} are not group homomorphisms, they instead obey the Cartan formula $c_n(V+W) = c_n(V) c_0(W) + c_{n-1}(V) c_1(W) + \cdots + c_0(V) c_n(W)$.  The $i$th Chern class becomes a group homomorphism on $F^{2i} \KU^0(X)$, since $c_j(E) = 0$ for any $j < i$ and $E \in F^{2i} \KU^0(X)$.  As all nontrivial cup products in $H^*(\Sigma X;\bZ)$ vanish, the Cartan formula shows that $c_i^{\Sigma}:\KU^1(X) \to H^{2i - 1}(X,\bZ)$ are group homomorphisms.

Lemma 4.1 of \cite{Doran} asserts that, when $X$ is a closed oriented $6$-manifold, the map
\begin{equation}
\label{eq:DM41}
(c_2,c_3):F^4 \KU^0(X) \to H^4(X,\bZ) \oplus H^6(X,\bZ)
\end{equation}
is an isomorphism onto
\begin{equation}
\label{eq:DM41-im}
\{(c_2,c_3) \mid \Sq^2(c_2) = c_3\} 
\end{equation}
where $\Sq^2:H^4(X,\bZ/2) \to H^6(X,\bZ/2)$ is a Steenrod operation.  Lemma 4.2 of \cite{Doran} asserts that the map
\begin{equation}
\label{eq:DM42}
(c_1,c_2,c_3):F^2\KU^0(X) \to H^2(X,\bZ) \oplus H^4(X,\bZ) \oplus H^6(X,\bZ)
\end{equation}
is an isomorphism onto
\begin{equation}
\label{eq:DM42-im}
\{(c_1,c_2,c_3) \mid \Sq^2(c_2) = c_3 + c_1 c_2 + c_1^3 \}
\end{equation}
I believe that \eqref{eq:DM41-im} is correct, but \eqref{eq:DM42-im} is not.  For example, if $X$ is the quintic $3$-fold, the virtual vector bundle $\cO(1) - \cO$ belongs to $F^2 \KU^0(X)$ and has $(c_1,c_2,c_3) = (h,0,0)$, where $h$ is the hyperplane section of $X \subset \bP^4$.  But $h^3 = 5 \in H^6(X,\bZ)$, which is nonzero in $H^6(X,\bZ/2)$.

\section{Conjectures}
\label{sec:three}

It should be possible to choose the isomorphisms \eqref{eq:111} to intertwine additional structures on $X$ and $\hat{X}$.

\subsection{$K$-homology}
In fact \eqref{eq:111} is expected for any mirror pair of Calabi-Yau manifolds of odd complex dimension.  If $X$ and $\hat{X}$ have even complex dimension, then we expect $\KU^i(X) \cong \KU^i(\hat{X})$ for $i = 0,1$.  I think the right way to organize these expectations is as an equivalence of $\KU$-module spectra:
\begin{equation}
\label{eq:spectrum}
\Sigma^{-n} \KU[X] \cong \KU^{\hat{X}}
\end{equation}
where $n$ is the complex dimension of $X$, $\Sigma$ denotes suspension, $\KU[?]$ denotes the $K$-homology spectrum and $\KU^?$ denotes the $K$-cohomology spectrum.  The $K$-homology and $K$-cohomology of a compact almost complex manifold are naturally identified, and $\KU$-theory is $2$-periodic, so \eqref{eq:spectrum} implies \eqref{eq:111} by taking homotopy groups.  Two $\KU$-module spectra are isomorphic if and only if their homotopy groups are isomorphic, so the converse is true as well.  But using $K$-homology in place of $K$-cohomology seems to go with the grain of homological mirror symmetry, in a way that we will explain.

\subsection{The large volume and large complex structure limits}
\label{subsec:lvllcsl}
For the rest of the paper we will be treating the symplectic geometry of $X$ and the complex geometry of $\hat{X}$. And we will assume that the symplectic form on $X$ has integral cohomology class $[\omega] \in H^2(X;\bZ)$.  The isomorphism class of line bundles whose Chern class is $[\omega]$ gives a unit in $\KU^0(X) := \pi_0(\KU^X)$, and (using the $\KU^X$-module structure on $\KU[X]$) a homotopy class of automorphisms of $\KU[X]$.  The corresponding homotopy class of automorphisms of $\KU^{\hat{X}}$ is a monodromy operator one obtains by putting $\hat{X}$ in a family $\hat{X}_t$, where $t$ runs through a punctured disk.

The Seidel strategy \cite{Seidel} for proving HMS is to prove it first in a limit --- one takes a hyperplane section $D$ of the line bundle on $X$, and the special fiber $\hat{X}_0$ at the center of the family $\hat{X}_t$, so that there is a mirror relationship between $X -D$ and $\hat{X}_0$.  $X-D$ is called the ``large volume limit'' and $\hat{X}_0$ is called the ``large complex structure limit'' of the mirror pair.  In such a case I conjecture (I am not sure how originally) that
\begin{equation}
\label{eq:at-limit}
\Sigma^{-n} \KU[X-D] \cong \KU^{\hat{X}_0}
\end{equation}
as $\KU$-modules.  For the noncompact $X-D$ or the singular $\hat{X}_0$, it is now necessary to pay attention to the difference between $\KU$-homology and $\KU$-cohomology.
\medskip

\noindent
{\bf Example.}  The case when $\hat{X} \subset \bC P^{n+1}$ is a degree $n+2$ hypersurface furnishes a standard example.  A mirror $X$ to $\hat{X}$ is obtained by resolving the singularities of an anticanonical hypersurface in a weighted projective $(n+1)$-space.  The limits $X -D$ and $\hat{X}_0$ can be described directly: $X -D \subset (\mathbf{C}^*)^{n+1}$ is any sufficiently generic hypersurface whose Newton polytope is the standard reflexive lattice simplex, e.g.
\begin{equation}
\label{eq:dual-ntic}
X-D := W^{-1}(0), \quad W:(x_0,\ldots,x_n) \mapsto x_0 + \cdots + x_n + \frac{1}{x_0\cdots x_n} - 1
\end{equation}
and $\hat{X}_0$ is the union of the coordinate hyperplanes
\begin{equation}
\label{eq:ntic}
\hat{X}_0 := \{[x_0,\ldots,x_n] \in \mathbf{C}P^{n+1} \mid x_0 \cdots x_n = 0\}
\end{equation}
For these examples, \eqref{eq:at-limit} can be deduced from a similar equivalence
\begin{equation}
\label{eq:LG}
\Sigma^{-n-1} \KU[(\bC^*)^{n+1},W^{-1}(0)] \cong \KU^{\mathbf{C}P^{n+1}}
\end{equation}
and from the long exact sequence of a pair.  The left-hand side of \eqref{eq:LG} denotes the $K$-homology of the pair $((\bC^*)^{n+1},W^{-1}(0))$, which has the same homotopy type as a bouquet of spheres --- one $(n+1)$-sphere for each critical point of $W$.  Note that \eqref{eq:LG} can be seen as a third variant of \eqref{eq:spectrum}, as $((\bC^*)^{n+1},W)$ is the Landau-Ginzburg mirror to projective space).

\subsection{$T$-duality}
\label{subsec:T-duality}
Homotopy classes of maps $\Sigma^{-n} \KU[X] \to \KU^{\hat{X}}$ are naturally identified with classes in the $n$th $K$-cohomology group $\KU^n(X \times \hat{X})$.  So if one wants to prove that $\Sigma^{-n} \KU[X]$ and $\KU^{\hat{X}}$ are isomorphic, one should investigate classes in $\KU^n(X \times \hat{X})$.  \S\ref{subsec:Pe-bundle} gives the example at the heart of SYZ --- a distinguished isomorphism class of line bundles on $T \times \hat{T}$ that (regarded as an element of $\KU^0(T \times \hat{T})$ induces an isomorphism
\begin{equation}
\label{eq:KUT-duality}
\KU[T] \cong \KU^{\hat{T}}
\end{equation}
when $T$ and $\hat{T}$ are dual tori.

When $X$ and $\hat{X}$ are mirror Calabi-Yaus of real dimension $2n$, fibering over the same base $B$ with dual torus fibers, this suggests that $\KU[X]$ and $\KU^{\hat{X}}$ could be identified by a virtual vector bundle on $X \times_B \hat{X}$ whose restriction to each fiber gives \eqref{eq:KUT-duality} --- a ``Poincar\'e bundle.'' The primary obstacle to doing this is that it is not clear what this virtual ``bundle'' should look like on singular fibers.  Indeed it should not be a bundle at all, but a class in $K$-homology $\KU_{3n}(X \times_B \hat{X})$ --- this group has a pushforward map to $\KU_{3n}(X \times \hat{X})$, which is isomorphic to $\KU^n(X \times \hat{X})$ using the $\KU$-orientations of $X$ and $\hat{X}$.

Even after discarding the singular fibers, or when they are just absent, there may be a Leray obstruction to finding the Poincar\'e bundle.  In the flat cases of \S\ref{sec:two}, this was simple but not exactly tautological.  At the large volume/large complex structure limit, the singular fibers can disappear, so that every fiber is a smooth torus (though the dimensions of these tori can jump); more precisely one can in some cases \cite{RSTZ} write $X - D$ as the homotopy colimit of a diagram of commutative Lie groups and homomorphisms, and $\hat{X}_0$ as the homotopy colimit of the diagram of dual groups (perhaps orbifolds), in this generality the Leray obstruction might be interesting.

As to singular fibers, it's been known for a long time what the necessary class in $\KU_{3n}$ looks like when $n = 2$, by hyperkahler rotating until $X \times_B \hat{X} \subset X \times \hat{X}$ is algebraic \cite{k3,BrMa}.  For higher even $n$, finding these Poincar\'e bundles is a more difficult algebraic geometry problem, even when the same hyperkahler techniques are available \cite{Arinkin, ADM}.  In general, especially for $n$ odd, the class in $\KU_{3n}(X \times_B \hat{X})$ cannot be algebraic; it would be interesting to describe it when $X \to B$ and $\hat{X} \to B$ are a dual pair of Gross's ``well-behaved'' singular $T^3$-fibrations \cite{Gross}.

\subsection{Blanc's invariant}
In \cite{Blanc}, Blanc showed how to compute the topological $K$-theory $\KU^Y$ of a complex algebraic variety $Y$ in a noncommutative fashion --- that is, Blanc introduced an invariant $\Kblanc(\cC) \in \Mod(\KU)$ for a $\bC$-linear dg category $\cC$, and showed
\begin{equation}
\label{eq:blanc}
\Kblanc(\Perf(Y)) \cong \KU^Y
\end{equation}
It is desirable to understand Blanc's invariant for categories arising from symplectic manifolds --- Fukaya categories and microlocal sheaf categories.  When $X$ is compact, K\"ahler with integer K\"ahler class, and Calabi-Yau, then Ganatra has conjectured that $\Kblanc(\Fuk(X))$ recovers the complex $K$-theory of $X$ whenever $\Fuk(X)$ is smooth and proper.  The last condition is motivated by results of \cite{Toen} (which state that when $Y$ is a compact complex manifold, $\Perf(Y)$ is smooth and proper if and only if $Y$ is algebraic) and the failure of \eqref{eq:blanc} for complex analytic manifolds that are not algebraic.

There is a basic problem with formulating Ganatra's conjecture precisely, or formulating any question about $\Kblanc(\Fuk(X))$ at all.  The Fukaya category of a symplectic manifold is not automatically defined over the complex numbers, but over a large Novikov field (we will call it $\mathfrak{N}$).  

\subsection{Achinger-Talpo and Blanc's invariant for $\bC((t))$-linear categories}
\label{subsec:achtal}
The $\bC$-linear structure on a dg category $\cC$ enters in Blanc's construction in an essential way, but for a compact symplectic manifold it is not usually possible to reduce the linear structure of $\Fuk(X)$ from $\mathfrak{N}$ to $\bC$.  Recent work of Achinger-Talpo, and also of Robalo and Antieau-Heller, allow for a definition of $\Kblanc(\cC)$ when $\cC$ is defined over $\bC((t))$ --- this version is adapted to Seidel's relative Fukaya category and to Ganatra's conjecture.

If $\cO \subset \bC((t))$ is the coordinate ring of an affine curve, and $Y \to \Spec(\cO)$ is a dominant map of algebraic varieties, then $\KU^{Y_a}$ has a local monodromy automorphism (call it $m$) at $t = 0$ whenever $Y_a$ is the fiber above a point $a$ close to $t = 0$.  We seek a computation of the pair $(\KU^{Y_a},m)$ that is both noncommutative and formal, in the sense that it depends only on the $\bC((t))$-linear category $\Perf(Y \times_{\cO} \bC((t)))$.  To define such a pair $(\KU^{Y_a},m)$ is equivalent to defining a $\KU$-module object of the $\infty$-category $\cS_{/S^1}$.

For any field $F$, let $\MV_F$ denote the $\infty$-category underlying the Morel-Voevodsky model structure for $\bA^1$-homotopy theory \cite[Def. 2.1]{MV}.  Let $\MV_F[(\bP^1)^{-1}]$ denote the stable $\infty$-category underlying the Morel-Voevodsky model category of motivic spectra over $F$ (\cite[Def. 5.7]{Voevodsky} or \cite[Def. 2.38]{Robalo}).  If $D$ is an $F$-linear triangulated dg category, let $\kmot(D) \in \MV_F[(\bP^1)^{-1}]$ denote the motivic refinement of the algebraic $K$-theory spectrum (as in \cite[Prop. 3.2]{AnHe}.  An embedding $F \to \bC$ induces a functor (preserving direct products and all small colimits)
\[
b^*:\MV_F \to \cS
\]
where $\cS$ denotes the $\infty$-category of spaces, and a similar functor on spectra that we will also denote by $b^*$ ($b$ for ``Betti'').  When $F = \bC$, the Blanc $K$-theory of $D$ is $\Kblanc(D) := \KU \otimes_{\mathbf{ku}} b^*\kmot(D)$, where $\ku$ denotes the connective complex $K$-theory spectrum.  

\begin{thm*}[Achinger-Talpo \cite{AchingerTalpo}]
There is a functor $\MV_{\mathbf{C}((t))} \to \cS_{/S^1}$ making the following diagram commute:
\begin{equation}
\xymatrix{
\MV_{\bC} \ar[r]^-{b^*}  \ar[d]_-{\times_{\bC} \bC((t))} & \cS \ar[d]^-{\times S^1} \\
\MV_{\bC((t))} \ar[r]_-{b_t^*} & \cS_{/S^1}
}
\end{equation}
\end{thm*}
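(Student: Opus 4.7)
The plan is to construct $b_t^*$ as a ``generic nearby fiber'' Betti realization. Given a finite-type smooth scheme $Y / \bC((t))$, spread it out to a smooth scheme $\mathcal{Y} \to U$, where $U = \Spec(R)$ is an affine curve with $R = \bC[t, t^{-1}, f^{-1}]$ for some $f \in \bC[t]$ with $f(0)\neq 0$, so that $\mathcal{Y} \times_R \bC((t)) \cong Y$. Such a spread-out exists by standard finite-presentation descent. Pull back the analytification $\mathcal{Y}^{\mathrm{an}} \to U^{\mathrm{an}}$ along a sufficiently small punctured disk $\Delta^* \hookrightarrow U^{\mathrm{an}}$ around the origin (possible once $|t|$ is smaller than every root of $f$ in $\bC^*$). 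Since $\Delta^* \simeq S^1$, the resulting map $\mathcal{Y}^{\mathrm{an}}|_{\Delta^*} \to \Delta^*$ presents an object of $\cS_{/S^1}$; this is the candidate value $b_t^*(Y)$.

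First I would check that the value is independent of the auxiliary choices. Two spreadings $(\mathcal{Y}, f)$ and $(\mathcal{Y}', f')$ become isomorphic after inverting some $g$ with $g(0) \neq 0$, and their analytifications restrict to canonically equivalent objects of $\cS_{/S^1}$ over any common punctured subdisk, since disk shrinking is a homotopy equivalence compatible with the $S^1$-projection. Morphisms and higher coherences propagate the same way, so $b_t^*$ descends from a functor on the filtered system of models to a functor on the category of smooth $\bC((t))$-schemes.

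Next I would extend $b_t^*$ to the motivic $\infty$-category $\MV_{\bC((t))}$. By the universal property of the $\bA^1$-local Nisnevich localization, it suffices to verify $\bA^1$-invariance and Nisnevich descent. For $\bA^1$-invariance, the analytification of $\mathcal{Y} \times_U \bA^1_U$ restricted to $\Delta^*$ is $\mathcal{Y}^{\mathrm{an}}|_{\Delta^*} \times \bC$, which projects to $\mathcal{Y}^{\mathrm{an}}|_{\Delta^*}$ by a fiberwise homotopy equivalence over $\Delta^*$ since $\bC$ is contractible. For Nisnevich descent, an \'etale distinguished square in smooth $\bC((t))$-schemes spreads out to one over some $U$, and its analytification is a distinguished open-cover pushout fiberwise over $\Delta^*$, hence in $\cS_{/S^1}$.

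Finally, the square commutes essentially tautologically. For a smooth $Z$ over $\bC$, the base change $Z_{\bC((t))}$ admits the trivial model $\mathcal{Z} := Z \times_\bC \Spec(\bC[t,t^{-1}])$; its analytification is $Z^{\mathrm{an}} \times \bC^*$, and the pullback over $\Delta^* \simeq S^1$ is precisely $b^*(Z) \times S^1 \to S^1$, matching the right vertical arrow. This natural equivalence extends by cocontinuity to all of $\MV_{\bC}$. The main obstacle is packaging the construction at the $\infty$-categorical level so that all coherences and functorialities align; in practice one invokes Ayoub's formalism of nearby/generic motives on $\bA^1_\bC$, within which this ``nearby fiber'' construction has already been made fully functorial, and Achinger-Talpo presumably build on that rather than constructing $b_t^*$ from scratch.
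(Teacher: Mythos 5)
Note that the paper does not itself prove this theorem; it cites it from Achinger--Talpo \cite{AchingerTalpo} and then records properties of the resulting functor $b_t^*$. So the comparison is only to your proof attempt, not to an internal argument.

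Your sketch breaks at the very first step, and in a way that is precisely the point of \cite{AchingerTalpo}. A smooth finite-type $\bC((t))$-scheme $Y$ cannot in general be spread out to a scheme over a punctured affine curve $U = \Spec\bC[t,t^{-1},f^{-1}]$. Finite-presentation descent works along a filtered colimit of rings, and the colimit of the rings $\bC[t,t^{-1},f^{-1}]$ over all $f$ with $f(0)\neq 0$ is $\bC(t)$, not $\bC((t))$: the defining equations of $Y$ have coefficients that are \emph{formal} Laurent series, and a typical such series (say $\sum_{n\ge 1} n!\,t^n$) has radius of convergence zero and does not lie in any finitely generated $\bC$-subalgebra of $\bC((t))$. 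There is therefore no model $\mathcal{Y}\to U$ whose analytification restricted to a small punctured disk $\Delta^*$ could serve as $b_t^*(Y)$; the ``nearby fiber of a spread-out family'' picture is simply unavailable over $\bC((t))$. Overcoming this is what makes the theorem nontrivial: one needs a construction that accepts genuinely formal data, for instance built from rigid-analytic or adic generic fibers of formal models over $\mathrm{Spf}\,\bC[[t]]$, or (in the smooth case) via an Artin--Popescu approximation argument, using that $\bC\{t\}$ is excellent and henselian, to replace the formal defining equations by convergent ones without changing the isomorphism type. Once one has \emph{some} analytification functor landing in $\cS_{/S^1}$, the downstream checks you outline --- independence of choices, $\bA^1$-invariance, Nisnevich descent, and commutativity of the square on constant families via the trivial model $Z\times_{\bC}\Spec\bC[t,t^{-1}]$ --- are the right ones, and that last commutativity check is indeed the easy part, as you say.
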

The functor $b_t^*$ carries the Morel-Voevodsky space $\bZ \times B\mathrm{GL} \in \MV_{\bC((t))}$ \cite[p. 138]{MV} representing algebraic $K$-theory to $\bZ \times \mathrm{BU} \times S^1$.  It also carries $\bP^1$ to $S^2 \times S^1$, and so induces a map to spectra in $\cS_{/S^1}$.  Thus one can define the Blanc $K$-theory of a $\bC((t))$-linear category $\cC$ to be 
\begin{equation}
\KU \otimes_{\mathbf{ku}} b_t^*\kmot(\cC)
\end{equation}

\subsection{Doing without Blanc's invariant}
\label{subsec:dowithout}
Like any spectrum, $\KU^Y$ fits into Sullivan's arithmetic square (\cite[Prop. 3.20]{Sullivan} or \cite[Prop. 2.9]{Bousfield})
\begin{equation}
\label{eq:sullivan}
\xymatrix{
\KU^Y \ar[r] \ar[d] & \ar[d] \prod_p L_{\hat{p}} \KU^Y \\
L_{\bQ} \KU^Y \ar[r] & L_{\bQ} \prod_p  L_{\hat{p}} \KU^Y
}
\end{equation}
which is homotopy Cartesian.  Here $L_{\bQ}$ denotes the rationalization and $L_{\hat{p}}$ the $p$-completion of a spectrum.  Thomason's descent theorem shows that, when $Y$ is a complex algebraic variety, $L_{\hat{p}}\KU^Y$ can be recovered from the algebraic $K$-theory spectrum of $\Perf(Y)$:
\begin{equation}
\label{eq:thomason}
L_{\hat{p}} \KU^Y \cong L_{K(1),p} \Kalg(\Perf(Y))
\end{equation}
From this point of view, Blanc's theorem is equivalent to a ``noncommutative'' construction of $L_{\bQ} \KU^Y$ and of the map $L_{\bQ} \KU^Y \to L_{\bQ} \prod_p L_{K(1),p} \Kalg(\Perf(Y))$.  If one is merely interested in the isomorphism type of $\KU^Y$, then Thomason allows it to be recovered from $\Kalg(\Perf(Y))$ only.

If $\cC$ is linear over an algebraically closed extension of $\bC$, and $p$ is any prime, then $L_{K(1),p} \Kalg(\cC)$ is a $L_{\hat{p}} \KU$-module in a natural way.  So a weaker form of Ganatra's conjecture can be formulated without invoking any form of Blanc's construction, this way: if $X$ is a compact symplectic manifold of dimension $2n$, with a smooth and proper $\mathfrak{N}$-linear Fukaya category, then for every prime $p$ the pair of $L_{\hat{p}} \KU$-module spectra
\[
L_{K(1),p} \Kalg(\Fuk(X)) \text{ and } \Sigma^{-n} L_{\hat{p}} \KU[X]
\]
are isomorphic.  Maybe it's appropriate to call the desired equivalence of spectra a homological mirror analog of Thomason's \eqref{eq:thomason}.  

\subsection{The Euler pairings} Let $\psi^{-1}:\KU \to \KU$ denote the natural $E_{\infty}$-ring map that carries a virtual vector space to its complex conjugate.  It induces an autoequivalence on $\Mod(\KU)$, the $\infty$-category of $\KU$-modules.  

The $2n$-manifolds $X$ and $\hat{X}$ have distinguished $\KU$-orientations --- that is, there is a distinguished class in $\KU_{2n}(X)$ and in $\KU_{2n}(\hat{X})$ that maps to a generator of $\KU_{2n}(X,X - x_0)$ and of $\KU_{2n}(\hat{X},\hat{X} - x_0)$.  Denote these classes by $[X]$ and $[\hat{X}]$ --- one is determined by the complex structure on $\hat{X}$ and the other by any choice of compatible almost complex structure on $X$.  The action of the line bundle fixes $[X]$ and the action of the monodromy operator fixes $[\hat{X}]$.  They induce a further structure on $\Sigma^{-n}\KU[X]$ and $\KU^{\hat{X}}$, namely the ``Euler pairings''
\begin{equation}
(\psi^{-1} \Sigma^{-n} \KU[X]) \otimes_{\KU} \Sigma^{-n} \KU[X] \to \KU \qquad (\psi^{-1} \KU^{\hat{X}}) \otimes_{\KU} \KU^{\hat{X}} \to \Sigma^{-2n} \KU
\end{equation}
Under \eqref{eq:blanc} and the desired equivalence between $\Sigma^{-n} \KU[X]$ and the Blanc $K$-theory of $\Fuk(X)$, these maps should be induced by the Hom structures on these categories, suggesting the purely topological problem of choosing \eqref{eq:spectrum} so that the pairings match.
On $\pi_0$ this problem is closely related to Iritani's $\Gamma$-conjectures, or to the rationality question of \cite[\S 2.2.7]{KKP}.

If $M_1$ and $M_2$ are $\KU$-module spectra, write $B_n(M_1,M_2)$ for the spectrum of maps from $(\psi^{-1} M_1) \otimes M_2$ to $\Sigma^{-n} \KU$.  This is a nondegenerate symmetric bilinear spectrum-valued functor on $\Mod(\KU)$, it would be interesting to know the $L$-theory of $B_n$.

\subsection{Exact manifolds}
\label{subsec:exact-manifolds}
If $X$ is a Weinstein manifold, a version of the Fukaya category generated by exact Lagrangian submanifolds is naturally defined over any coefficient ring (not just for $\mathfrak{N}$-algebras).  The same is true for the category of sheaves with a microsupport condition (my comfort zone).  In either case the coefficient ring can be taken to be $\bC$ and one may apply Blanc's construction without worrying about the Novikov parameter.  I propose the following analogue of Ganatra's conjecture:

\begin{conj*}[Assembly]
Let $Q$ be a $d$-dimensional $\mathrm{Spin}^c$-manifold, let $\Lambda \subset T^* Q$ be a conic Lagrangian, and let $U$ be an open subset of $Q$.  Let $\Sh_{\Lambda}^w(U,\bC) \subset \Sh(U,\bC)$ be Nadler's wrapped variant \cite{N-wrapped} of the category of sheaves with microsupport in $\Lambda$.
\begin{enumerate}
\item There is a natural map
\begin{equation}
\label{eq:conj}
\Sigma^{-d} \KU[T^* U,T^* U - \Lambda] \to \Kblanc(\Sh_{\Lambda}^w(U,\bC)),
\end{equation}
that is covariantly functorial for open embeddings
\item Whenever $\Sh_{\Lambda}^w(U,\bC)$ is homologically smooth and proper, \eqref{eq:conj} is an isomorphism.
\end{enumerate}
\end{conj*}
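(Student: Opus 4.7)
The plan is to construct both sides of \eqref{eq:conj} as cosheaves of $\KU$-module spectra on $U$, to build the assembly map at the cosheaf level, and to reduce the isomorphism claim in part (2) to a local verification plus a global descent statement that should be equivalent to the smoothness and properness hypothesis.

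First I would establish a cosheaf structure on each side. The assignment $V \mapsto \Sigma^{-d}\KU[T^*V,\, T^*V - \Lambda]$ is a cosheaf of $\KU$-modules on $U$ by Mayer--Vietoris for relative $K$-homology. The assignment $V \mapsto \Kblanc(\Sh_\Lambda^w(V,\bC))$ should also be a cosheaf, using that Nadler's wrapped microsheaf category has good covariant functoriality \cite{N-wrapped} and a Kan-type colimit presentation over a basis of opens, together with a compatibility between $\Kblanc$ and this presentation --- the latter compatibility is a nontrivial property that would itself need to be established. The natural map in part (1) is then produced locally: a class in $\Sigma^{-d}\KU[T^*V, T^*V - \Lambda]$ can be represented, after using the $\mathrm{Spin}^c$-structure of $Q$, by a Lagrangian cycle in $T^*V$ with boundary on $\Lambda$, and the microsheaf-to-Lagrangian correspondence promotes such a cycle to a wrapped constructible sheaf whose Blanc class realises the element.

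To show the map is a sectionwise equivalence on a fine enough cover, I would verify it on two local models. When $\Lambda \cap T^*V$ is the zero section, the left side is $\KU[V]$ by the Thom isomorphism and the right side reduces to a known model for locally constant sheaves whose Blanc invariant matches $\KU[V]$. When $\Lambda$ is the conormal bundle to a smooth closed submanifold $Z \subset V$, both sides should reduce by Thom/Poincar\'e and microlocalization to an invariant of $Z$. General conic $\Lambda$ is handled by a Whitney stratification, assembling these zero-section and conormal building blocks along strata.

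The main obstacle is the global step in part (2): one must show that smoothness and properness of $\Sh_\Lambda^w(U,\bC)$ is equivalent to the canonical map from the homotopy colimit (over a good cover) of local $\Kblanc$-spectra to the global $\Kblanc(\Sh_\Lambda^w(U,\bC))$ being an equivalence. I expect this to follow from the interplay between properness (compact generation of the category) and smoothness (perfectness of the diagonal bimodule), mediated by a \v{C}ech spectral sequence for Blanc's functor; but making this precise, and ruling out higher descent obstructions intrinsic to $\Kblanc$, is where the real technical work lies. Part (1), by comparison, should reduce to a formal naturality check once the cosheaf structures are in place.
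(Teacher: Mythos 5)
This statement is explicitly labeled a \emph{Conjecture} in the paper, and the author gives no proof --- in fact the surrounding text (``I propose the following analogue of Ganatra's conjecture'') makes clear that it is stated as an open problem. So there is no ``paper's own proof'' to compare against, and your submission should be assessed as a proposed strategy for an open conjecture rather than as a reproof of a known result.

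As a strategy, the overall shape (build cosheaves of $\KU$-modules on both sides, produce an assembly map by local comparison, then try to promote it to an equivalence using the smooth-and-proper hypothesis) is reasonable and roughly matches the author's implicit framing --- the conjecture is even \emph{named} ``Assembly'' because of the analogy with Waldhausen's assembly map in \S 3.8. But I want to flag where I think the plan as written is circular or underestimates the difficulty. The central claim --- that $V \mapsto \Kblanc(\Sh_\Lambda^w(V,\bC))$ is a cosheaf --- is not a technicality to be checked; it is essentially the whole content of part (2). $\Kblanc$, like algebraic $K$-theory, does not commute with colimits of categories or satisfy descent for arbitrary covers, and the paper's \S 3.8 (``String topology'') gives an explicit counterexample to the naive global statement \eqref{eq:cannot-hold}: when $\Lambda$ is the zero section, $\Sh^w_\Lambda(Q;\bC) \simeq \Mod(\bC[\Omega Q])$ depends only on the rational homotopy type of $Q$, so $\Kblanc$ of it cannot see the $K$-theory of $Q$. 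Your local verification on small opens $V$ (where everything degenerates to $\Kblanc(\Perf \bC) \simeq \KU$) does go through, which is exactly why the discrepancy is a \emph{failure of descent}, not a failure of the local map. Consequently you cannot first establish the cosheaf property and then deduce the isomorphism; rather, the smooth-and-proper hypothesis must be the input that \emph{makes} the \v{C}ech-to-global map an equivalence, and identifying why (and ruling out higher obstructions) is the entire problem. Your final paragraph acknowledges this, but the earlier steps treat it as already resolved.

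Two smaller points. First, the step ``a class in $\Sigma^{-d}\KU[T^*V, T^*V-\Lambda]$ can be represented by a Lagrangian cycle with boundary on $\Lambda$, and the microsheaf-to-Lagrangian correspondence promotes it to a wrapped sheaf'' is not a construction --- there is no such functorial dictionary between relative $K$-homology cycles and objects of the wrapped category, and producing the map in part (1) requires genuinely new input (most plausibly, a Waldhausen-style assembly from a covariantly functorial linearization, as the paper sketches in \S 3.8 via $\Kalg(R)[Q] \to \Kalg(R[\Omega Q])$). Second, handling a general conic $\Lambda$ by induction over a Whitney stratification is plausible in spirit, but the recollement/gluing behavior of $\Kblanc$ along strata is again exactly the sort of descent statement whose failure is the obstruction; this cannot be assumed.

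In short: you have not proved anything the paper claims to prove, because the paper claims no proof; and your sketch, while pointed in a sensible direction, front-loads the hard descent input as if it were a routine cosheaf check.
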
 

I expect that one can formulate a similar conjecture for the wrapped and partially wrapped Fukaya categories of a Weinstein manifold $X$ --- a natural map
\[
\Sigma^{-d} \KU^{\eta}[X,X -\Lambda]) \to \KU(\Fuk_{\Lambda}^w(X))
\]
where $\Lambda$ is the skeleton and $\eta$ is a twisting parameter, presumably trivialized on the cotangent bundle of a $\mathrm{Spin}^c$-manifold.

\subsection{String topology}
Known results on homological mirror symmetry for toric varieties \cite{Kuwagaki}, combined with computations like \eqref{eq:LG} give an indurect route to equivalences
\begin{equation}
\label{eq:cannot-hold}
\Kblanc(\Sh_\Lambda^w(Q;\mathbf{C})) \cong \Sigma^{-d} \KU[T^*Q,T^* Q - \Lambda]
\end{equation}
in some examples where $Q$ is a compact torus.  But the case where $Q$ is arbitrary and $\Lambda = Q$ is the zero section (we may call this the ``string topology case'' after \cite{Abouzaid2}) shows that \eqref{eq:cannot-hold} cannot hold in general.  Let us discuss this class of examples in more detail.

If $\Lambda \subset T^* Q$ is the zero section, then $\Sh^w_{\Lambda}(Q;\mathbf{C})$ is naturally equivalent to the category of left dg-modules over 
\begin{equation}
\label{eq:COQ}
\bC[\Omega Q] := C_*(\Omega Q;\bC)
\end{equation}
the $\bC$-valued chains on the based loop space of $Q$.  This quasi-isomorphism type of this algebra knows the rational homotopy type of $Q$, but nothing more, so one cannot expect to recover from it the $K$-theory of $Q$. 

Nevertheless, the \emph{algebraic} $K$-theory of $\bC[\Omega Q]$ is a variant of Waldhausen's $A$-theory of $Q$, and is the target of an assembly map \cite[\S 3.2]{Waldhausen}.  More generally, for any ring or ring spectrum $R$ there is a natural map
\begin{equation}
\label{eq:waldhausen}
\Kalg(R)[Q] \to \Kalg(R[\Omega Q])
\end{equation}
Letting $R$ run through $\bC$-algebras and taking realizations should produce a map $\KU[Q] \to \Kblanc(\bC[\Omega Q])$.   A $\Spin^c$-structure on $Q$ gives an identification of $\KU[Q]$ with $\KU[T^* Q, T^*Q - Q]$, the domain of \eqref{eq:conj}.

In Waldhausen's setting, the failure of the assembly map to be an isomorphism is very interesting.  When $R$ is the sphere spectrum, the cone on \eqref{eq:waldhausen} (whose codomain is called the $A$-theory of $Q$) is Hatcher's ``Whitehead spectrum''  \cite{Hatcher} that encodes the higher simple homotopy of $Q$, see \cite{Waldhausen} and Lurie's notes available at \url{math.harvard.edu/~lurie/281.html}.  When $R$ is a $\bC$-algebra, or anything else, I don't know if there is a similar interpretation.

\subsection{Speculation about the length filtration}
\label{subsec:metric}
I wonder whether one could recover the complex $K$-theory of an exact manifold from a suitable absolute version of the Fukaya category, even if this category is not homologically smooth.  (``Absolute'' means ``not relative,'' i.e. not defined over $\bC$ or $\bC((t))$ but only over the full Novikov field.)  It would require a version of Blanc's construction that treats the Novikov parameter in a more interesting way than \S\ref{subsec:achtal}--\S\ref{subsec:dowithout}, and one could hope that in this more interesting treatment the assembly map would become an isomorphism.  I will explain what I mean by making an explicit string-topology-style conjecture along these lines.  I have no evidence for it, but I will make some remarks after stating the conjecture.
\medskip

Let $Q$ be a Riemannian manifold, and let $\Omega_{q_0} Q$ be the space of rectifiable loops in $Q$ that start and end at a basepoint $q_0$.  We will treat the basepoint a little more carefully than at the end of \S\ref{subsec:exact-manifolds}, in order to make a point about it later.  The metric endows the chain algebra $\bC[\Omega_{q_0} Q]$ \eqref{eq:COQ} with an $\bR$-indexed filtration: for each $t \in \bR$ we let $F_{<t} \Omega_{q_0} Q \subset \Omega_{q_0} Q$ denote the space of loops of length less than $t$, and put
\[
F_{<t} \bC[\Omega_{q_0} Q] := \bC[F_{<t} \Omega_{q_0} Q]
\]

\begin{conj*}[Length and $K$-theory]
Let $(Q,q_0)$ and $(Q',q'_0)$ be compact, pointed Riemannian manifolds and suppose that there is a quasi-isomorphism of dg algebras
\begin{equation}
\label{eq:iso-lakt}
C_*(\Omega_{q_0} Q,\bC) \cong C_*(\Omega_{q'_0} Q',\bC)
\end{equation}
that for all $t$ carries $F_{<t} C_*(\Omega_{q_0} Q;\bC)$ quasi-isomorphically to $F_{<t} C_*(\Omega_{q'_0} Q';\bC)$ 
\begin{equation}
\label{eq:iso-lakt-filt}
C_*(\Omega_{q_0} Q;\bC) \xrightarrow{\sim} C_*(\Omega_{q'_0} Q';\bC)
\end{equation}
Then $\KU_*(Q) \cong \KU_*(Q')$.
\end{conj*}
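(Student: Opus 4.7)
The plan is to manufacture from the length-filtered $\bC$-linear dg algebra $C_*(\Omega_{q_0} Q; \bC)$ a $\mathfrak{N}$-linear dg algebra whose suitably-defined Blanc-style $K$-theory recovers $\Sigma^{-d}\KU[Q]$, then to transport a length-filtered quasi-isomorphism across this construction. The rationale for using the Novikov field is forced on us: as the author points out in \S\ref{subsec:metric}, the plain quasi-isomorphism type of $C_*(\Omega_{q_0} Q; \bC)$ only sees the rational homotopy type of $Q$, so the $\bR$-indexed length filtration must be converted into arithmetic information, and the Novikov parameter $T$ is the natural device. A Rees-style construction
\[
A_Q := \Bigl(\bigoplus_{t \in \bR} F_{<t} C_*(\Omega_{q_0} Q; \bC)\cdot T^{t}\Bigr)^{\wedge},
\]
completed in the $T$-adic filtration, produces a $\mathfrak{N}$-linear dg algebra depending functorially on the filtered data. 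A length-filtered quasi-isomorphism \eqref{eq:iso-lakt-filt} immediately gives a quasi-isomorphism $A_Q \simeq A_{Q'}$ of $\mathfrak{N}$-linear dg algebras; everything then rests on identifying the isomorphism type of $\KU_*(Q)$ from $A_Q$ alone.

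First I would identify $A_Q$ with the endomorphism algebra of a cotangent fiber $F_{q_0} \subset T^*Q$ inside the \emph{absolute} (unwrapped, $\mathfrak{N}$-linear) Fukaya category of the cotangent bundle, using the Abbondandolo--Schwarz/Abouzaid type identification of Floer morphisms between cotangent fibers with chains on the based loop space, together with the usual identification of the Floer action filtration with the length filtration on loops. Next I would invoke a generation statement in the spirit of Ganatra--Pardon--Shende, adapted to the absolute setting, so that the quasi-isomorphism $A_Q \simeq A_{Q'}$ promotes to a Morita equivalence of the absolute Fukaya categories of $T^*Q$ and $T^*Q'$. Finally, to reach $\KU_*(Q)$, I would appeal to a Novikov-field analogue of Ganatra's conjecture and of the assembly conjecture of \S\ref{subsec:exact-manifolds}: the to-be-defined $\Kblanc$-type invariant of the absolute Fukaya category of $T^*Q$ should agree with $\Sigma^{-d}\KU[Q]$, with any $\Spin^c$ twists absorbed into the cotangent orientation.

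The hard part is precisely this last step, and it is where the author's warning \emph{``I have no evidence for it''} really bites. One has to do two things simultaneously: construct a version of Blanc's invariant over $\mathfrak{N}$ which, unlike the treatments of \S\ref{subsec:achtal}--\S\ref{subsec:dowithout}, uses the Novikov filtration to see past the rational homotopy type of $Q$; and prove that, applied to $A_Q$, this invariant computes $\Sigma^{-d}\KU[Q]$ so that the corresponding assembly map is an isomorphism. Without this, the construction recovers only rational information, and one falls back into the setting where the string-topology-type counterexamples of \S\ref{subsec:metric} apply. By contrast, the Rees construction, the comparison with the absolute Fukaya category, and the generation by a cotangent fiber are all, in one form or another, either in the literature or within reach of standard techniques; it is the quantitative passage from length-filtered chains to complex $K$-theory that carries the full weight of the conjecture.
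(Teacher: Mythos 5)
This statement is a conjecture, not a theorem: the paper offers no proof of it, and in fact the author explicitly writes ``I have no evidence for it.'' Your proposal is not a proof either --- and you acknowledge this yourself, correctly locating the missing step. What you have written is essentially a restatement, in slightly more detail, of the paper's own motivating remarks in \S\ref{subsec:metric}: the Rees construction on $F_{<\bullet}\bC[\Omega_{q_0}Q]$ to produce an $\mathfrak{N}$-algebra, the expectation that this algebra generates the absolute Fukaya category of the disk bundle in $T^*Q$, and the hope for a yet-to-be-defined Blanc-type $\KU$-module invariant over $\mathfrak{N}$ for which the assembly map of \S\ref{subsec:exact-manifolds} would become an isomorphism. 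You supplement this with the Abbondandolo--Schwarz/Abouzaid comparison between Floer cohomology of cotangent fibers and chains on the based loop space, and a generation-by-a-cotangent-fiber statement in the absolute setting, both of which are sensible and roughly within reach; but these are the parts that were never in doubt.

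The genuine gap is exactly the one you flag, and it is worth being completely explicit about why the earlier sections of the paper offer no help. The Achinger--Talpo construction of \S\ref{subsec:achtal} produces a Betti realization over $\bC((t))$ landing in $\cS_{/S^1}$, and the Thomason-style descent of \S\ref{subsec:dowithout} recovers $p$-completed $\KU$ from algebraic $K$-theory of a $\bC$-linear (or algebraically closed extension) category. Neither mechanism uses the \emph{real-number-indexed} structure of the Novikov filtration: both treat the Novikov/Laurent parameter purely formally, and so both would collapse $A_Q$ back to something that remembers only the quasi-isomorphism type of $\bC[\Omega_{q_0}Q]$, i.e.\ only the rational homotopy type of $Q$. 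As the $S^7$ example at the end of \S\ref{subsec:metric} shows, that is strictly less than $\KU_*(Q)$. So to prove the conjecture one must invent a realization functor that is sensitive to the numerical values of the filtration breaks, and there is currently no candidate for such a functor, nor a mechanism by which it would produce the torsion in $\KU_*(Q)$. Until that exists, the argument does not close; your proposal and the paper agree on the shape of the program and agree that its central step is open.
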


A suitable Rees construction on the filtered dg algebra $F_{<\bullet}\bC[\Omega_{q_0} Q]$ might give an $\mathfrak{N}$-algebra that generates the absolute wrapped Fukaya category of the unit disk bundle in $T^* Q$.  The real conjecture, which I do not know how to formula precisely, is that there is a procedure similar to Blanc's for extracting a $\KU$-module from such a category, and that on the Fukaya category of the disk bundle of a Riemannian (or merely Finsler?) $Q$, it outputs the $K$-homology of $Q$.  (In particular, the notion of equivalence used in \eqref{eq:iso-lakt} is stronger than necessary: a Morita-style notion would be more appropriate.  For instance if $q_0$ and $q_1$ are different points of $Q$, there is not likely to be any quasi-isomorphism between $\bC[\Omega_{q_0} Q]$ and $\bC[\Omega_{q_1} Q]$ that preserves lengths, but the length-filtered space of paths from $q_0$ to $q_1$ could provide the Morita equivalence.)
\medskip

Let us give a reason to doubt the conjecture, followed by something more optimistic.  If $Q$ is simply-connected, one recovers $\bC[\Omega_q Q]$, up to quasi-isomorphism, as the cobar construction of the coalgebra of chains on $Q$ \cite[\S 2]{JMoore}.  The cobar construction has a natural filtration which seems to ``coarsely'' recover the legnth filtration on $\bC[\Omega_q Q]$, regardless of the metric.  Under the identification with the cobar complex of $\bC[Q]$, the loops of metric length $m$ are sandwiched between the cobars of word length $b_1 m$ and $b_2 m$, where $b_1$ and $b_2$ are constants independent of $m$.  So any way of recovering the $K$-theory of $Q$ would require knowledge of the exact numerical values of the breaks in the $\bR$-indexed filtration.

These breaks in the length filtration are a kind of homological, based version of the length spectrum of the metric.  The genuine length spectrum is known to recover the Laplace eigenvalues of $Q$, if the metric is generic \cite{DuistermaatGuillemin}.  Bergeron and Venkatesh have observed that similar spectral data can see a little bit of the homotopy type of $Q$ beyond the rational homotopy type \cite{BeVe}.  Specifically the Cheeger-Muller theorem gives a formula for the alternating product 
\begin{equation}
\label{eq:tors-prod}
\prod_i \#\mathrm{tors}H^i(Q;\bZ)^{(-1)^i}
\end{equation}
in terms of the Laplace-de Rham eigenvalues and the volumes of the images of $H^i(Q;\bZ)$ in the spaces of harmonic $i$-forms.  In another ``coarse'' sense, (perhaps a related one?) these eigenvalues are given by the Weyl law --- it is their exact numerical values that are needed to recover \eqref{eq:tors-prod}.
\medskip

\noindent
{\bf Example.}  Let $Q$ be a nontrivial $\mathrm{SU}(2)$-bundle over $S^4$.  Any degree one map $Q \to S^7$ induces a quasi-isomorphism 
\begin{equation}
\label{eq:S7example}
\bC[\Omega_{q_0} Q] \cong \bC[\Omega_{x_0} S^7].
\end{equation}
But if the Chern class of the bundle is $m \geq 2$, there is a little bit of torsion in the $K$-theory of $Q$: $\KU_1(Q) = \bZ \oplus \bZ/m$ (while $\KU_0(Q) = \bZ$, and $\KU_0(S^7) = \KU_1(S^7) = \bZ$).  The conjecture predicts that there is no metric on $Q$ for which \eqref{eq:S7example} preserves the length filtration.  The possibly spurious comparison made in the remarks above is that, since $\eqref{eq:tors-prod} = m$, the Laplace-de Rham spectra of $Q$ and of $S^7$ are never exactly the same for any choice of metrics.

\subsection*{Acknowledgements}
I thank Mohammed Abouzaid and Sheel Ganatra for sharing their ideas about the $K$-theory of Fukaya categories, Piotr Achinger and Mattias Talpo for their paper about $\bC((t))$-schemes, and Nick Addington and Ben Antieau for some corrections and other advice.  I have also benefited from discussions with Ron Donagi, Mauricio Romo, Paul Seidel, Jake Solomon, Semon Rezchikov, and Arnav Tripathy.  I was supported by NSF-DMS-1811971.

\end{document}